
\documentclass[a4paper,12pt,leqno]{article}

\usepackage{amssymb,amsmath}
\usepackage{amsthm}
\usepackage[abbrev]{amsrefs}
\usepackage{xy}
\xyoption{all}
\usepackage{graphicx}

\numberwithin{equation}{section}
\newtheorem{theorem}{Theorem}[section]
\newtheorem{lemma}[theorem]{Lemma}
\newtheorem{corollary}[theorem]{Corollary}
\newtheorem{exm}[theorem]{Example}
\newenvironment{example}{\begin{exm}\em}{\end{exm}}

\newcommand\V{\bigvee}
\newcommand\Max{\operatorname{Max}}
\newcommand\ie{i.e.}
\newcommand\st{\mid}
\newcommand\opens{\operatorname{\mathcal{O}}}
\newcommand\groupoid{\operatorname{\mathcal{G}}}
\newcommand\downsegment{{\downarrow}}
\newcommand\ts{\mathrm{T}}
\newcommand\ipi{\mathcal I}
\newcommand\lcc{\operatorname{{\mathcal L}^{\vee}}}

\newcommand\dom{\operatorname{dom}}

\begin{document}

\title{The many groupoids of a stably Gelfand quantale\thanks{Work funded by FCT/Portugal through project PEst-OE/EEI/LA0009/2013 and by COST (European Cooperation in Science and Technology) through COST Action MP1405 QSPACE.}}
\author{{\sc Pedro Resende}}

\date{~}

\maketitle

\vspace*{-1cm}
\begin{abstract}
We study the projections of an arbitrary stably Gelfand quantale $Q$ and show that each projection determines a pseudogroup $S\subset Q$ (and a corresponding localic \'etale groupoid $G$) together with a map of involutive quantales $p:Q\to\lcc(S)\ [=\opens(G)]$. As an application we obtain a simplified axiomatization of inverse quantal frames (= quantales of \'etale groupoids) whereby such a quantale is shown to be the same as a unital stably Gelfand quantal frame $Q$ whose partial units cover $Q$.
\\
\vspace*{-2mm}~\\
\textit{Keywords:} Stably Gelfand quantales, pseudogroups, inverse quantal frames, \'etale groupoids.\\
\vspace*{-2mm}~\\
2010 \textit{Mathematics Subject
Classification}: 06F07, 20M18, 22A22
\end{abstract}

\setcounter{tocdepth}{2}
\tableofcontents

\section{Introduction}

Gelfand quantales are the involutive quantales $Q$ that satisfy the condition $aa^*a=a$ for all right-sided elements $a\in Q$, a motivating example being the involutive quantale $\Max A$ of a C*-algebra $A$~\cite{Curacao,MP1,MP2,KPRR,KR}. The stronger notion of stably Gelfand quantale requires the condition $aa^*a=a$ to be satisfied by every element $a\in Q$ for which $aa^*a\le a$. This terminology appeared in~\cite{GSQS}, where it was also noted that one pleasant property of such quantales is that several variants of the notion of quantale-valued set collapse into a single one, in this sense making stably Gelfand quantales the most general class of involutive quantales for which it seems to make sense to study notions of sheaf (an earlier appearance of the stably Gelfand law, in the context of sheaves on quantaloids, is given by the pseudo-rightsided quantaloids of~\cite{Garraway}).

A subclass consists of the strongly Gelfand quantales, which are the involutive quantales $Q$ for which the law $a\le aa^*a$ holds for all $a\in Q$. Examples of strongly Gelfand quantales are the involutive quantales $\opens(G)$ of \'etale or open groupoids $G$~\cite{Re07,PR12}. In particular, the quantales $\opens(G)$ of \'etale groupoids $G$ are the inverse quantal frames, which are examples of supported quantales. The latter are unital and also strongly Gelfand, and one crucial property of any supported quantale $Q$ is that the set of partial units
\begin{equation}\label{pu}
\ipi(Q)=\{a\in Q\st a^*a\le e,\ aa^*\le e\}
\end{equation}
is a complete and infinitely distributive inverse semigroup, herein referred to just as a pseudogroup, whose multiplication and natural order coincide with those of $Q$. Then the inverse quantal frames $Q$ can be precisely characterized as being the supported quantales (with stable support) whose order satisfies the distributivity property of locales, such that the partial units cover $Q$~\cite{Re07}:
\begin{equation}\label{cc}
\V\ipi(Q)=1\;.
\end{equation}

A pseudogroup can also be obtained from an arbitrary Gelfand quantale, in the following specific sense: the subquantale $\ts(Q)$ of two-sided elements of any Gelfand quantale $Q$ is a locale, hence the same as a pseudogroup whose elements are all idempotents:
\begin{equation}\label{ts}
\ts(Q) = \{a\in Q\st a1\le a,\ 1a\le a\}\;.
\end{equation}

By a projection of a Gelfand quantale $Q$ is meant an element $b\in Q$ such that $b^2=b=b^*$ (equivalently, $b^2\le b\le b^*$). An example is the top element $1$ (and indeed any two-sided element) and, in the case of unital quantales, the multiplication unit $e$.
In this paper we unify the above two examples of pseudogroups by considering elements which are both like partial units and two-sided elements with respect to a projection $b\in Q$:
\[
\ipi_b(Q)=\{a\in Q\st a^*a\le b,\ aa^*\le b,\ ab\le a,\ ba\le a\}\;.
\]
As it will turn out, if $Q$ is stably Gelfand then $\ipi_b(Q)$ is a pseudogroup (Theorem~\ref{thm:IbQ}). The purpose of this paper is to study the projections of stably Gelfand quantales in terms of their pseudogroups (and associated \'etale groupoids). As an application, we shall obtain a simplified characterization of inverse quantal frames, which in particular does not depend on the theory of supported quantales (Corollary~\ref{cor:iqfs}). Namely, we prove that an involutive quantale $Q$ is an inverse quantal frame if and only the following conditions are satisfied:
\begin{enumerate}
\item $Q$ is stably Gelfand;
\item $Q$ is unital;
\item $Q$ is a quantal frame;
\item The covering condition \eqref{cc} holds.
\end{enumerate}

Another motivation behind the notion of stably Gelfand quantale is the quantale $\Max A$ of a C*-algebra $A$. This is because a projection $B\in\Max A$ is exactly a sub-C*-algebra $B\subset A$, and the pseudogroup $\ipi_B(\Max A)$ is closely related to inverse semigroups and groupoids whose construction from certain subalgebras is well known~\cite{Kumjian,Renault,Exel}. The appropriate context in which to study this relation is that of Fell bundles on groupoids~\cite{Kumjian98} (or inverse semigroups).
We study the interplay between quantales and Fell bundles in~\cite{QFB}, where also the proof that $\Max A$ is stably Gelfand is presented. Nevertheless the present paper contains a quantale-theoretic analogue of the relation between sub-C*-algebras and Fell bundles, namely in the form of a relation between projections of a stably Gelfand quantale $Q$ and maps of involutive quantales $p:Q\to\opens(G)$, where a map is dually defined to be a homomorphism $p^*:\opens(G)\to Q$ of involutive quantales as in~\cite{MP1,OMIQ}, in analogy with locale theory. In~\cite{QFB} the relation between quantales and Fell bundles is mediated  by maps $p:\Max A\to\opens(G)$, where $G$ is a suitable topological \'etale groupoid.

We have tried to make the presentation of this paper self contained but also streamlined, and it would be useful for the reader to be familiar with the interplay between \'etale groupoids, inverse quantal frames and pseudogroups, as introduced in~\cite{Re07}. Other basic facts, terminology and notation concerning quantales, groupoids, and inverse semigroups, can be found in~\cite{Rosenthal1,gamap2006,Lawson}. By a pseudogroup (instead of abstract complete pseudogroup as in~\cite{Re07}) will be meant a complete and infinitely distributive inverse semigroup, as in~\cite{LL}.

This paper is partly based on results that were presented in a workshop at Imperial College, London, in 2009~\cite{ICOx}. The kind invitation and support of A.\ D\"oring during that visit is gratefully acknowledged.

\section{Groupoids from projections}\label{sec:stabgelf}

\paragraph{Projections and pseudogroups.}
Let $Q$ be an involutive quantale, and let $b\in Q$.
An element $s\in Q$ is a \emph{partial unit relative to $b$} if it satisfies the following conditions:
\begin{enumerate}
\item $s^*s\le b$
\item $ss^*\le b$
\item $sb\le s$
\item $bs\le s$
\end{enumerate}
The set of partial units of $Q$ relative to $b$ is denoted by $\ipi_b(Q)$. We note that if $b$ is the top element $1$ then $\ipi_b(Q)$ coincides with the set  $\ts(Q)$ of two-sided elements.

In most of this paper the element $b$ with respect to which $\ipi_b(Q)$ is defined will be assumed to be a projection, but the following fact holds without this assumption:

\begin{lemma}
Let $Q$ be an involutive quantale, and let $b\in Q$.
Then $\ipi_b(Q)$ is closed under meets of nonempty subsets in $Q$.
\end{lemma}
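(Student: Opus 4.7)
The plan is to take a nonempty family $\{s_i\}_{i\in I}\subseteq\ipi_b(Q)$ and its meet $s=\bigwedge_{i\in I}s_i$ in $Q$, and verify each of the four defining conditions of $\ipi_b(Q)$ using only monotonicity of multiplication and involution, together with the universal property of the meet.

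First I would observe that $s\le s_i$ for every $i$, and hence $s^*\le s_i^*$ since involution is order-preserving. For conditions (1) and (2), pick any index $i_0\in I$ (possible because the family is nonempty) and use monotonicity of the product to conclude $s^*s\le s_{i_0}^*s_{i_0}\le b$ and $ss^*\le s_{i_0}s_{i_0}^*\le b$. This is the step where nonemptiness is actually needed.

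Conditions (3) and (4) follow from the universal property of the meet: for every $i\in I$, $sb\le s_ib\le s_i$, so $sb$ is a lower bound of the family $\{s_i\}$ and therefore $sb\le\bigwedge_{i\in I}s_i=s$; the argument for $bs\le s$ is symmetric. The only potential obstacle is in making sure that $s^*s\le s_i^*s_i$ really follows from $s\le s_i$, which is immediate from the fact that quantale multiplication preserves the order in both arguments.
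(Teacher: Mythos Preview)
Your proof is correct and follows essentially the same approach as the paper: monotonicity of multiplication to get $sb\le s_ib\le s_i$ for each $i$ (hence $sb\le s$, and symmetrically $bs\le s$), and a choice of a single index, available by nonemptiness, to obtain $s^*s\le s_{i_0}^*s_{i_0}\le b$ and $ss^*\le b$. The paper's write-up differs only cosmetically, passing through the intermediate bound $\bigl(\bigwedge S\bigr)b\le\bigwedge_{s\in S}sb$.
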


\begin{proof}
Let $S\subset \ipi_b(Q)$. Then we have
\[
\bigl(\bigwedge S\bigr)b\le\bigwedge_{s\in S} sb\le \bigwedge_{s\in S} s=\bigwedge S\;,
\]
and, similarly, $b\bigl(\bigwedge S\bigr)\le \bigwedge S$. Assuming now that $S\neq\emptyset$ and choosing a fixed element $s\in S$ we have
\[
\bigl(\bigwedge S\bigr)^*\bigl(\bigwedge S\bigr)\le s^* s\le b\;,
\]
and, similarly, $\big(\bigwedge S\bigr)\bigl(\bigwedge S\bigr)^*\le b$. Hence, $\bigwedge S\in\ipi_b(Q)$. \qedhere
\end{proof}

\begin{lemma}\label{twosidedlemma}
Let $Q$ be a stably Gelfand quantale. Then $Q$ is a Gelfand quantale, so the involutive subquantale $\ts(Q)$ of two-sided elements of $Q$ is idempotent with trivial involution, and unital with $e=1$ (hence being a locale with multiplication equal to $\wedge$~\cite{JT}*{Prop.\ III.1.1}).
\end{lemma}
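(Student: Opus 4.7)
The plan is to verify three structural facts about $\ts(Q)$ — that $1$ is its multiplicative unit, that it is idempotent, and that its involution is trivial — each being a short consequence of the Gelfand equation $a=aa^*a$, and then invoke the cited Joyal--Tierney proposition.

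First I would check that $Q$ is Gelfand: if $a\in Q$ is right-sided, then $a^*a\le 1$ gives $aa^*a\le a\cdot 1\le a$, and the stably Gelfand hypothesis upgrades this to equality. In particular every $a\in\ts(Q)$ satisfies $a=aa^*a$.

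The crux is to show that the product on $\ts(Q)$ coincides with meet. Since the conditions $a^*a\le 1$ and $aa^*\le 1$ are automatic, $\ts(Q)$ coincides with $\ipi_1(Q)$, so the preceding lemma shows it is closed under non-empty meets. For $a,b\in\ts(Q)$, two-sidedness gives $ab\le a\cdot 1\le a$ and $ab\le 1\cdot b\le b$, hence $ab\le a\wedge b$. Conversely, setting $c=a\wedge b\in\ts(Q)$ and applying the Gelfand identity $c=cc^*c$, I would bound $c\le a\cdot 1\cdot b\le ab$, using $c\le a$, $c^*\le 1$, $c\le b$, and $a\cdot 1\le a$. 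So $ab=a\wedge b$ on $\ts(Q)$, which at once yields idempotency ($a^2=a$), commutativity, and unitality with $e=1$ ($a\cdot 1=a\wedge 1=a$). Trivial involution then follows from one more use of the Gelfand equation: $a=aa^*a\le 1\cdot a^*\cdot 1\le a^*\cdot 1\le a^*$, the last inequality holding because $a^*\in\ts(Q)$, and by symmetry $a^*\le a$.

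At this stage $\ts(Q)$ is a unital idempotent involutive quantale with $1$ as unit and trivial involution, and the Joyal--Tierney proposition identifies it with a locale with multiplication $=\wedge$. The only step with any content is the collapse $ab\le a\wedge b\le ab$ via the Gelfand identity at $c=a\wedge b$; once this is in hand, the remaining verifications are all one-line chains of inequalities, so I do not foresee any substantive obstacle.
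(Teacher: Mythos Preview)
Your argument is correct.  The paper's proof is considerably shorter: it establishes that $Q$ is Gelfand exactly as you do (for right-sided $a$ one has $aa^*a\le a1\le a$, so the stably Gelfand hypothesis gives equality), and then simply cites \cite{MP1}*{section~2} for all the remaining assertions about $\ts(Q)$ in an arbitrary Gelfand quantale, rather than reproving them.  You instead unpack these facts by hand, the pivotal step being the collapse $ab=a\wedge b$ obtained by applying the Gelfand identity to $c=a\wedge b$ (with the preceding lemma supplying $c\in\ts(Q)$).  Your route is more self-contained and makes visible exactly how the Gelfand law forces the two-sided part to be a locale, whereas the paper's route treats this as known background; once you have shown $ab=a\wedge b$ directly, even the final appeal to Joyal--Tierney is essentially redundant, since the locale distributivity then follows immediately from the quantale axiom.
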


\begin{proof}
Let $a\in Q$ be a right-sided element (\ie, $a1\le a$). Then $aa^*a\le a1\le a$, and thus $aa^*a=a$ because we are assuming that $Q$ is stably Gelfand. This shows that $Q$ is a Gelfand quantale, and the remaining assertions follow from~\cite{MP1}*{section~2}. \qedhere
\end{proof}

\begin{theorem}\label{thm:IbQ}
Let $Q$ be a stably Gelfand quantale, and let $b$ be a projection.
\begin{enumerate}
\item $\ipi_b(Q)$ is a pseudogroup, which will be referred to simply as the \emph{pseudogroup of $b$}.
\item The idempotents of $\ipi_b(Q)$ coincide with the two-sided elements of the subquantale $\downsegment b$.
\end{enumerate}
\end{theorem}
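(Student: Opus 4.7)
My plan is to prove part~(2) first and then use the locale structure it identifies to dispatch part~(1) quickly. The key preparatory observation is that $\downsegment b$ is itself an involutive subquantale of $Q$: closure under involution follows from $b^* = b$, and closure under multiplication from $b^2 = b$. Moreover, any inequality $aa^*a \le a$ with $a\in\downsegment b$ also holds in $Q$, so $\downsegment b$ inherits the stably Gelfand property. Lemma~\ref{twosidedlemma} therefore applies to $\downsegment b$ and gives that $\ts(\downsegment b)$ is a locale with trivial involution and multiplication equal to meet.

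Next I would verify closure of $\ipi_b(Q)$ under multiplication and involution; these are routine manipulations using $b^2 = b = b^*$ and the defining inequalities, e.g.\ $(st)^*(st) \le t^*bt \le t^*t \le b$ using $bt \le t$. For any $s \in \ipi_b(Q)$ the chain $ss^*s \le sb \le s$ together with the stably Gelfand hypothesis gives $ss^*s = s$, so every element is regular with $s^*$ an inverse; one also deduces $bs = sb = s$, so $b$ acts as an identity. One direction of part~(2) is then immediate from the locale description: each $c \in \ts(\downsegment b)$ is a self-adjoint idempotent of $Q$ satisfying the four defining conditions for $\ipi_b(Q)$. For the converse, suppose $c \in \ipi_b(Q)$ with $c^2 = c$; then both $cc^*$ and $c^*c$ are self-adjoint idempotents two-sided in $\downsegment b$, hence elements of $\ts(\downsegment b)$, and so they commute under the locale multiplication. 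The computations $c^*c \cdot cc^* = c^*cc^* = c^*$ and $cc^* \cdot c^*c = cc^*c = c$, obtained using $c^2 = c$ and regularity, force $c = c^*$, after which $c = c^*c \le b$ together with $cb \le c$ and $bc \le c$ exhibits $c$ as a two-sided element of $\downsegment b$.

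Part~(1) now follows briskly. The idempotents of $\ipi_b(Q)$ form a locale, hence commute, and a regular semigroup with commuting idempotents is an inverse semigroup. For completeness, given a compatible family $S \subseteq \ipi_b(Q)$, the quantale join $a = \V S$ lies in $\ipi_b(Q)$: compatibility makes each $s^*t$ and $st^*$ an idempotent, hence $\le b$ by part~(2), so $a^*a, aa^* \le b$; the bounds $ab \le a$ and $ba \le a$ are inherited by distributivity. Infinite distributivity over such joins is inherited directly from $Q$, and a short check shows that the quantale order restricted to $\ipi_b(Q)$ coincides with the natural inverse-semigroup order (if $s \le t$ in $Q$ then $s = ss^*s \le ss^*t \le st^*t \le sb \le s$, giving $s = ss^*t$). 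The main obstacle is the self-adjointness $c = c^*$ for idempotents $c \in \ipi_b(Q)$; its resolution hinges on applying Lemma~\ref{twosidedlemma} not to $Q$ but to the substructure $\downsegment b$, an approach made possible only by first observing that $\downsegment b$ inherits the stably Gelfand property.
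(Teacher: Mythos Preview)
Your proof is correct and follows essentially the same architecture as the paper's: both hinge on recognizing $\downsegment b$ as a stably Gelfand subquantale so that Lemma~\ref{twosidedlemma} makes $\ts(\downsegment b)$ a locale, then identify $E(\ipi_b(Q))=\ts(\downsegment b)$ and read off the inverse-semigroup, completeness, and distributivity properties. The only real difference is in the reverse inclusion of part~(2): the paper reaches $f\le b$ in one stroke via $f=ff^*f=ff^*f^*f\le ff^*b\le ff^*\le b$, whereas you take the slightly longer (but equally valid) detour of first showing $c=c^*$ from the commutativity of $cc^*$ and $c^*c$ in $\ts(\downsegment b)$.
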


\begin{proof}
Let $s\in\ipi_b(Q)$. Then
$ss^*s\le sb\le s$,
and thus $s=sb=ss^*s$. Similarly, $s=bs$. In addition, we show that $\ipi_b(Q)$ is closed under multiplication, and thus that it is a subsemigroup of $Q$ (and also a monoid with unit $b$ because $b\in\ipi(Q)$): indeed, for all $s,t\in\ipi_b(Q)$ we have
\[\begin{array}{ccccccccc}
(st)^*st&=&t^*s^*st&\le& t^*bt&\le& t^*t&\le& b\\
st(st)^*&=&stt^*s^*&\le& sbs^*&\le& ss^*&\le& b\\
(st)b&=&s(tb)&\le& st\\
b(st)&=&(bs)t&\le& st\;.
\end{array}\]
We note that
$\downsegment b$ is a stably Gelfand subquantale of $Q$. If $s\in\ts(\downsegment b)$ we have $ss^*\le bb^*=b$, $s^*s\le b^*b=b$, and $sb\le s$ and $bs\le s$, so that $\ts(\downsegment b)\subset\ipi_b(Q)$. Moreover, by Lemma~\ref{twosidedlemma}, $\ts(\downsegment b)$ is a locale (with $\wedge$ equal to multiplication), and thus it is contained in the set of idempotents of $\ipi_b(Q)$:
\[\ts(\downsegment b)\subset E(\ipi_b(Q))\;.\]
Conversely, if $f$ is an idempotent of $\ipi_b(Q)$ we have $fb\le f$ and $bf\le f$ and also
\[f=ff^*f=ff^*f^*f\le ff^*b\le ff^*\le b\;,\]
and thus $f\in\ts(\downsegment b)$.
This shows that the set of idempotents of $\ipi_b(Q)$ is a locale. Therefore the idempotents commute, and thus $\ipi_b(Q)$ is an inverse semigroup (because it also satisfies $ss^*s=s$). Moreover, it is infinitely distributive because the subsemigroup of idempotents is.

Finally let us see that $\ipi_b(Q)$ is complete. Let $S\subset \ipi_b(Q)$ be a compatible subset and let $s=\V S$ in $Q$. We have
\begin{eqnarray*}
sb&=&\V\{tb\st t\in S\}\le s\\
bs&=&\V\{bt\st t\in S\}\le s\\
ss^*&=&\V\{tu^*\st t,u\in S\}\le b\\
s^*s&=&\V\{u^* t\st t,u\in S\}\le b\;,
\end{eqnarray*}
where the latter two equations use the fact that $S$ is a compatible set, so $s\in\ipi_b(Q)$.
\end{proof}

\paragraph{Groupoids.} There is a bijection up to isomorphisms between pseudogroups and (localic) \'etale groupoids~\cite{Re07}. For each \'etale groupoid $G$ the bijection gives us the pseudogroup of local bisections $\ipi(G)$. The converse construction is mediated by quantales (see section~\ref{sec:locproj}) and it is such that from a pseudogroup $S$ we obtain an \'etale groupoid $G$ whose locale of objects $G_0$ is isomorphic to the locale of idempotents $E(S)$, and whose locale of arrows $G_1$ is the locale $\lcc(S)$ of compatible ideals of $S$, where by a compatible ideal is meant a downwards closed set $J\subset S$ such that $\V Y\in J$ for all the (possibly empty) compatible subsets $Y\subset J$ (in particular $0\in J$ because $\emptyset\subset J$, so the compatible ideals are nonempty). 

Let $b$ be a projection of a stably Gelfand quantale $Q$. The localic \'etale groupoid that corresponds to the pseudogroup $\ipi_b(Q)$ will be denoted by $\groupoid_b(Q)$. We have:
\begin{eqnarray}
\ipi(\groupoid_b(Q))&\cong&\ipi_b(Q)\;;\\
\groupoid_b(Q)_0&\cong&E(\ipi_b(Q))=\ts(\downsegment(b))\;.
\end{eqnarray}
We say that the projection $b$ is \emph{spatial} if the locale $\ts(\downsegment(b))$ is spatial. In this case the locale $\groupoid_b(Q)_0$ is spatial, and thus so is $\groupoid_b(Q)_1$~\cite{elephant}*{Lemma 1.3.2(v)}, so $\groupoid_b(Q)$ can be regarded as a topological groupoid. Then the locale $\lcc(\ipi_b(Q))$ is order-isomorphic to the topology of (the space of arrows of) the groupoid $\groupoid_b(Q)$, with quantale operations obtained pointwise from the groupoid operations.

\section{Examples}\label{sec:examples}

\paragraph{C*-algebras.}

If $A$ is a C*-algebra and $B$ is a sub-C*-algebra of $A$, the locale $\ts(\downsegment(B))$ is precisely the locale $I(B)$ of closed two-sided ideals of $B$, which is isomorphic to the Jacobson topology of the primitive spectrum of $B$, and thus $B$ is a spatial projection of $\Max A$. Hence, there is a topological \'etale groupoid $\groupoid_B(\Max A)$ associated to each sub-C*-algebra $B\subset A$. This is the motivating example in this paper.

If $B$ is a commutative subalgebra then the object space of this groupoid is a locally compact Hausdorff space (the spectrum of $B$), and thus the groupoid is locally compact and locally Hausdorff. Moreover, if $A$ is separable and $B$ is a Cartan subalgebra in the sense of~\cite{Renault}, it follows from \cite{Renault}*{Theorem 5.9} and \cite{QFB}*{Theorem 6.8 and Theorem 6.10} that $\groupoid_B(\Max A)$ is a second countable topologically principal locally compact Hausdorff \'etale groupoid, isomorphic to the Weyl groupoid of $B$ in the sense of~\cite{Renault}.

\paragraph{Quantales of binary relations.}

Let $X$ be a set, and let $Q=2^{X\times X}$ be the unital involutive quantale of binary relations on $X$. The order is inclusion, the multiplication unit is the diagonal relation $\Delta_X$, and the multiplication of two relations $R,S\subset X\times X$ is
\[
RS=\bigl\{ (z,x)\in X\times X\st \exists_{y\in X}\ (z,y)\in R\textrm{ and }(y,x)\in S\bigr\}\;.
\]
The pseudogroup $\ipi(Q)$ consists of the (graphs of) partial bijections on $X$; that is, bijections $\alpha:Y\to Z$ for subsets $Y,Z\subset X$.
Quantales of binary relations are strongly Gelfand, and a projection in $Q$ is the same as an equivalence relation $R\subset Y\times Y$ on a subset $Y\subset X$. Then the pseudogroup $\ipi_R(Q)$ is isomorphic to $\ipi(Q')$, where $Q'$ is the quantale of binary relations on the quotient set $Y/R$: 
\[
Q'=2^{Y/R\times Y/R}\;.
\]
To each $U\in\ipi_R(Q)$ the isomorphism assigns the partial bijection $\boldsymbol\alpha_U$ on $Y/R$ whose domain is the set
\[
\dom(\boldsymbol\alpha_U) = \bigl\{[y]\st (y',y)\in U\textrm{ for some }y'\in U\bigr\}\;,
\]
and which is such that, for each $[y]\in \dom(\boldsymbol\alpha_U)$,
\[
\boldsymbol\alpha_U([y]) = \{y'\in Y\st (y',y)\in U\}\;.
\]
Conversely, given a bijection $\alpha:Z\to W$ with $Z,W\subset Y/R$,
define $\boldsymbol U_\alpha\in \ipi_R(Q)$ by
\[
\boldsymbol U_\alpha = \bigl\{(y',y)\in X\times X\st [y]\in Z\textrm{ and }\alpha([y])=[y']\bigr\}\;.
\]
Then we can check that $\boldsymbol\alpha_{\boldsymbol U_\alpha}=\alpha$ and $\boldsymbol U_{\boldsymbol \alpha_U}=U$, and that $\boldsymbol \alpha_{(-)}$ is a homomorphism of pseudogroups, so
\[
\ipi_R(Q)\cong\ipi(Q')\;.
\]
In terms of groupoids, $\groupoid_R(Q)$ is (isomorphic to) the pair groupoid (= total binary relation) on $Y/R$.

Two subexamples are worth mentioning:
\begin{itemize}
\item If $Y\subset X$ and $R=\Delta_Y\subset \Delta_X$ then $\groupoid_R(Q)$ is the pair groupoid on $Y$.
\item If $R=X\times X$ then $\groupoid_R(Q)$ is the groupoid $\boldsymbol 1$ with a single object and its identity arrow. Accordingly, the pseudogroup $\ipi_R(Q)$ is the two element chain $\ts(Q)=\{\emptyset,X\times X\}$, which is isomorphic to the discrete topology of $\boldsymbol 1$.
\end{itemize}

The latter two examples are easy to describe in the more general situation where $G$ is an arbitrary topological \'etale groupoid with object space $G_0$, and $Q=\opens(G)$ is its topology:
\begin{itemize}
\item If $Y\subset G_0$ is an open set of objects then $\groupoid_Y(Q)$ is (isomorphic to) the full subgroupoid of $G$ on $Y$ (its arrows are all the arrows of $G$ with domain and codomain in $Y$).
\item And $\groupoid_G(Q)$ is the space $G_0/G$ of orbits of $G$, regarded as a topological groupoid $H=H_0$.
\end{itemize}

\section{Localic projections}\label{sec:locproj}

\paragraph{Inverse quantal frames.} Let us recall a few more facts about the relation between pseudogroups and \'etale groupoids, now taking into account inverse quantal frames~\cite{Re07}.
Let $S$ be a pseudogroup, and let $Q$ be an involutive quantale. By a \emph{homomorphism} $\varphi:S\to Q$ will be meant a mapping that satisfies the following conditions for all $s,t\in S$ and all compatible sets $Y\subset S$:
\[
\varphi(st)=\varphi(s)\varphi(t)\;,\quad\varphi(s^{-1})=\varphi(s)^*\;,\quad\varphi\bigl(\V Y\bigr)=\V\varphi(Y)\;.
\]
The locale $\lcc(S)$ described in the previous section also has a structure of unital involutive quantale (an inverse quantal frame), and we refer to it as the \emph{quantale completion} of $S$. The involutive quantale structure of $\lcc(S)$ is easy to describe: the involution is computed by taking pointwise inverses, the order is inclusion, and the multiplication $JK$ of two compatible ideals $J$ and $K$ is the least compatible ideal that contains their pointwise product. We also note that binary meets $J\wedge K$ coincide with intersections $J\cap K$. Moreover, the principal order ideals \[\downsegment(s)=\{t\in S\st t\le s \}\] are compatible ideals, the mapping $\varphi_S:S\to\lcc(S)$ defined by $s\mapsto\downsegment(s)$ is a homomorphism, and it has the following universal property: for all involutive quantales and all homomorphisms $\varphi:S\to Q$ there is a unique homomorphism of involutive quantales
$\varphi^\sharp:\lcc(S)\to Q$ such that the following diagram commutes:
\[
\xymatrix{
S\ar[rr]^{\varphi_S}\ar[rrd]_{\varphi}&&\lcc(S)\ar[d]^{\varphi^\sharp}\\
&&Q
}
\]
Note that $\varphi^\sharp$ is defined, for all compatible ideals $J$, by
\[
\varphi^\sharp(J)=\V \varphi(J)\;.
\]
Moreover, $\lcc(S)$ is an inverse quantal frame, and every inverse quantal frame arises like this, up to isomorphisms, for if $Q$ is an inverse quantal frame we have $Q\cong\lcc(\ipi(Q))$. In particular, $\lcc(S)$ is a unital quantale, whose multiplicative unit is $e=E(S)$, and its order has the locale distributivity property:
\[
J\cap \V_i K_i = \V_i J\cap K_i\;.
\]

From any inverse quantale frame $Q$ there is a construction of an \'etale groupoid $\groupoid(Q)$ and, conversely, from any \'etale groupoid $G$ one obtains an inverse quantal frame $\opens(G)$. If $G$ is a spatial groupoid it can be identified with a topological groupoid, and $\opens(G)$ can be identified with the topology  of $G$ (cf.\ section~\ref{sec:examples}). The $\groupoid$ and $\opens$ constructions satisfy the following properties,
\begin{eqnarray*}
G&\cong&\groupoid(\opens(G))\\
Q&=&\opens(\groupoid(Q))\\
\ipi(G)&\cong&\ipi(\opens(G))\;,
\end{eqnarray*}
so in particular there is a bijection between isomorphism classes of \'etale groupoids and of inverse quantal frames.
The groupoid $G$ which is associated to a pseudogroup $S$ is defined in this way from the inverse quantal frame $\lcc(S)$,
\[
G:=\groupoid(\lcc(S))\;,
\]
so we have
\begin{equation}\label{opvslcc}
\opens(G)=\lcc(S)\;.
\end{equation}

\paragraph{Homomorphisms and maps from projections.} Let us return to stably Gelfand quantales. Let $Q$ be a stably Gelfand quantale, and let $b\in Q$ be a projection. By~\eqref{opvslcc}, the groupoid and the pseudogroup associated to $b$ are related via their inverse quantal frames by
\begin{equation}\label{opvslcc2}
\opens(\groupoid_b(Q))=\lcc(\ipi_b(Q))\;,
\end{equation}
so we shall write either $\lcc(\ipi_b(Q))$ or $\opens(\groupoid_b(Q))$ somewhat arbitrarily, or according to notational convenience.

The inclusion $\ipi_b(Q)\to Q$ is a homomorphism, and thus it has an extension to a homomorphism of involutive quantales
\[
\varphi_b:\lcc(\ipi_b(Q))\to Q\;,
\]
given, for all $J\in\lcc(\ipi_b(Q))$, by
\[
\varphi_b(J) = \V J\;.
\]
We also denote by
\[
p_b:Q\to\opens(\groupoid_b(Q))
\]
the map of involutive quantales that is defined by the condition $p_b^*=\varphi_b$.

The image of $\varphi_b$ is an involutive subquantale of $Q$, which we denote by $\opens_b(Q)$. We say that the projection $b$ is \emph{localic} if $\opens_b(Q)$ is itself a locale (\ie, an involutive quantal frame).

\begin{example}\label{em:invquantales}
Let $Q$ be an inverse quantale~\cite{Re07}. Then $\ipi_e(Q)=\ipi(Q)$, $\opens_e(Q)=Q$, and $\varphi_e:\lcc(\ipi(Q))\to Q$ is a surjective homomorphism. It follows that $\varphi_e$ is injective, and $e$ is a localic projection, if and only if $Q$ is an inverse quantal frame.
\end{example}

\begin{theorem}\label{littlelemma1}
Let $Q$ be a stably Gelfand quantale, and let $b\in Q$ be a projection. The following conditions are equivalent:
\begin{enumerate}
\item $b$ is localic;
\item $\varphi_b$ is injective;
\item $p_b$ is a surjection.
\end{enumerate}
\end{theorem}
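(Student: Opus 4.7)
The strategy is to treat the equivalence (2)$\Leftrightarrow$(3) as formal and to concentrate on (1)$\Leftrightarrow$(2). Since $p_b^* = \varphi_b$ by construction, and since by convention (in analogy with the locale case) a map of involutive quantales is a surjection precisely when its dual homomorphism is injective, the equivalence (2)$\Leftrightarrow$(3) is essentially definitional.

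For (2)$\Rightarrow$(1), suppose $\varphi_b$ is injective. Then $\varphi_b:\lcc(\ipi_b(Q))\to\opens_b(Q)$ is a bijective homomorphism of involutive quantales, because $\opens_b(Q)$ is defined as the image of $\varphi_b$. A bijective sup-preserving map between complete lattices is automatically an order-isomorphism, so its inverse also preserves sups; together with the routine observation that the inverse preserves multiplication and involution, this makes $\varphi_b$ an isomorphism of involutive quantales onto $\opens_b(Q)$. Since $\lcc(\ipi_b(Q))$ is an inverse quantal frame, so is $\opens_b(Q)$; in particular the latter is a locale, showing that $b$ is localic.

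For (1)$\Rightarrow$(2), which is the substantive direction, I would prove that for every compatible ideal $J\in\lcc(\ipi_b(Q))$,
\[
J \;=\; \{s\in\ipi_b(Q)\st s\le\varphi_b(J)\}\;;
\]
this recovers $J$ from $\varphi_b(J)=\V J$ and hence yields injectivity. One inclusion is immediate. For the other, fix $s\in\ipi_b(Q)$ with $s\le\V J$; both $s$ and $\V J$ lie in $\opens_b(Q)$, so the meet $s\wedge_b\V J$ computed internally in $\opens_b(Q)$ equals $s$. By the opening lemma of Section 2, $\ipi_b(Q)$ is closed under binary meets in $Q$, so $s\wedge_Q t\in\ipi_b(Q)\subseteq\opens_b(Q)$ for each $t\in J$, and a short maximality argument shows that $s\wedge_Q t$ coincides with the internal meet $s\wedge_b t$ in $\opens_b(Q)$. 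Locale distributivity in $\opens_b(Q)$ then gives
\[
s \;=\; s\wedge_b\V_{t\in J}t \;=\; \V_{t\in J}(s\wedge_Q t)\;.
\]
Each $s\wedge_Q t\le t$ lies in $J$ by downward closure, and the family $\{s\wedge_Q t\}_{t\in J}$ is compatible because all its members lie below the single partial unit $s$; since $J$ is a compatible ideal, this sup---namely $s$---lies in $J$, as required.

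The main obstacle is the bookkeeping around meets in this last step: $\opens_b(Q)$ is a subquantale of $Q$ but not in general a sub-meet-semilattice, and it is precisely the opening lemma that ensures the internal and ambient meets coincide on the partial units that appear in the distributivity calculation---exactly what is required to translate a purely internal frame property of $\opens_b(Q)$ into a statement about joins of elements of $Q$.
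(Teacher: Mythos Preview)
Your proof is correct. The equivalences (2)$\Leftrightarrow$(3) and (2)$\Rightarrow$(1) match the paper's treatment essentially verbatim. For (1)$\Rightarrow$(2), you and the paper rely on the same two ingredients---the fact that binary meets of elements of $\ipi_b(Q)$ computed in $Q$ coincide with the internal meets in $\opens_b(Q)$, and frame distributivity in $\opens_b(Q)$---but finish differently. The paper uses these to show that $\varphi_b$ preserves binary meets into $(\opens_b(Q),\sqcap)$, hence is a surjective frame homomorphism, and then invokes the external lemma \cite{Re07}*{Prop.\ 2.2} (a surjective frame map injective on a downwards-closed basis is injective). You instead directly exhibit a left inverse to $\varphi_b$, namely $a\mapsto\{s\in\ipi_b(Q)\st s\le a\}$, by showing via distributivity and the compatible-ideal axioms that this recovers $J$ from $\V J$. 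Your route is more self-contained, avoiding the external citation; the paper's is slightly more conceptual, isolating the frame-homomorphism property of $\varphi_b$ as the essential content.
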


\begin{proof}
Assume that 1 holds, and let $I,J\in\lcc(\ipi_b(Q))$. Since we are not assuming that $\opens_b(Q)$ is closed under binary meets in $Q$, let us write $\sqcap$ for the binary meet operation in $\opens_b(Q)$. We have, for all $I,J\in\lcc(\ipi_b(Q))$,
\[
\varphi_b(I)\sqcap \varphi_b(J) = \bigl(\V I\bigr)\sqcap\bigl(\V J\bigr) = \V_{a\in I,b\in J} a\sqcap b\;.
\]
Moreover, the binary meet operation in $\ipi_b(Q)$ coincides with that of $Q$, so we have
\[
\varphi_b(I)\sqcap \varphi_b(J) = \V_{a\in I,b\in J} a\wedge b\;.
\]
Since compatibly closed ideals $I$ and $J$ are downwards closed in $\ipi_b(Q)$, and $\ipi_b(Q)$ is closed under binary meets in $Q$, we further have
\[
I\cap J = \{a\wedge b\st a\in I,\ b\in J\}\;,
\]
and thus we obtain
\[
\varphi_b(I)\sqcap \varphi_b(J) = \V (I\cap J) = \varphi_b(I\cap J)\;.
\]
Hence, $\varphi_b$ defines a homomorphism of locales onto $\opens_b(Q)$. Since $\lcc(\ipi_b(Q))$ has a downwards closed basis consisting of the principal order ideals in $\ipi_b(Q)$, and $\varphi_b$ is injective on the basis, it follows that it is injective \cite{Re07}*{Prop.\ 2.2}; that is, 2 holds. Now assume that 2 holds. Then $\varphi_b$ defines an order isomorphism onto its image, which therefore is a locale.
Finally, 2 and 3 are equivalent by definition, for a surjective map of involutive quantales $p$ is defined to be one whose inverse image homomorphism $p^*$ is injective.
\end{proof}

\section{Inverse quantal frames revisited}

Recall that by an involutive quantal frame $Q$ is meant an involutive quantale whose order has the locale distributivity property: for all $a\in Q$ and all $Y\subset Q$ we have
\[
a\wedge\V Y = \V_{y\in Y} a\wedge y\;.
\]
An example is provided by inverse quantal frames, which in~\cite{Re07} are shown to be precisely the unital involutive quantal frames $Q$ that satisfy the covering condition \eqref{cc} and for which the following two conditions hold for all $a\in Q$ ($a1\wedge e$ is the support of $a$):
\begin{eqnarray}
a1\wedge e&\le&aa^*\label{cond1}\\
a&\le&(a1\wedge e)a\label{cond2}\;.
\end{eqnarray}
Any unital involutive quantale satisfying these two conditions is obviously strongly Gelfand and, hence, stably Gelfand. In what follows we shall see that for unital involutive quantal frames satisfying \eqref{cc} the converse holds:  \eqref{cond1}--\eqref{cond2} can be replaced by the simpler assumption that $Q$ is stably Gelfand.

\begin{theorem}\label{thm:invqfr}
Let $Q$ be a stably Gelfand quantal frame, and let $b\in Q$ be a projection. Then $\varphi_b$ is injective (equiv., $p_b$ is a surjection) and it preserves binary meets. In addition, if $\V\ipi_b(Q)=1$ then $\varphi_b:\lcc(\ipi_b(Q))\to Q$ is a homomorphism of locales (equiv., $p_b$ is a map of locales).
\end{theorem}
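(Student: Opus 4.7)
The plan is to reuse the argument of Theorem~\ref{littlelemma1}, with the crucial observation that the frame distributivity of $Q$ makes the meet-preservation step essentially automatic. First I would show that $\varphi_b$ preserves binary meets, then bootstrap to injectivity using the principal-ideal basis technique already employed in Theorem~\ref{littlelemma1}, and finally derive top-preservation from the covering hypothesis.

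For the meet-preservation step, given $I,J\in\lcc(\ipi_b(Q))$, I would apply frame distributivity in $Q$ twice to obtain
\[
\varphi_b(I)\wedge\varphi_b(J) \;=\; \bigl(\V I\bigr)\wedge\bigl(\V J\bigr) \;=\; \V_{s\in I,\,t\in J} s\wedge t.
\]
The first lemma of Section~\ref{sec:stabgelf} ensures that $\ipi_b(Q)$ is closed under nonempty meets in $Q$, so each $s\wedge t$ lies in $\ipi_b(Q)$; since $I$ and $J$ are downward closed, $s\wedge t\in I\cap J=I\wedge J$. Hence $\varphi_b(I)\wedge\varphi_b(J)\le\varphi_b(I\wedge J)$, and the reverse inequality is immediate from monotonicity. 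Combined with preservation of arbitrary joins, this makes $\varphi_b$ a locale homomorphism onto its image $\opens_b(Q)$. Since $\lcc(\ipi_b(Q))$ admits a downward closed basis of principal ideals $\downsegment(s)$ with $\varphi_b(\downsegment(s))=s$, the map is injective on this basis, so \cite{Re07}*{Prop.\ 2.2} yields injectivity. By Theorem~\ref{littlelemma1}, this is equivalent to $p_b$ being a surjection and to $b$ being a localic projection.

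Finally, under the extra hypothesis $\V\ipi_b(Q)=1$, the top element of $\lcc(\ipi_b(Q))$ is $\ipi_b(Q)$ itself, and $\varphi_b(\ipi_b(Q))=\V\ipi_b(Q)=1$, so $\varphi_b$ also preserves the top. Together with preservation of finite meets and arbitrary joins, this makes $\varphi_b$ a homomorphism of locales; dually, $p_b$ is a map of locales. I do not anticipate a serious obstacle: the frame hypothesis collapses what was the subtle meet-preservation step of Theorem~\ref{littlelemma1} into a one-line distributivity calculation, and the only other nontrivial input — the basis-injectivity trick via \cite{Re07}*{Prop.\ 2.2} — is already in place from that earlier proof.
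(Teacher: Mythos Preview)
Your proposal is correct and follows essentially the same approach as the paper: use frame distributivity in $Q$ to show $\varphi_b$ preserves binary meets (via $\varphi_b(I)\wedge\varphi_b(J)=\V_{s\in I,\,t\in J}s\wedge t=\varphi_b(I\cap J)$, using closure of $\ipi_b(Q)$ under nonempty meets and downward closure of $I,J$), then deduce injectivity, and finally obtain top-preservation from the covering hypothesis. The only cosmetic difference is that the paper packages the injectivity step by noting that the image is a locale and invoking Theorem~\ref{littlelemma1}, whereas you unwrap that invocation and cite \cite{Re07}*{Prop.\ 2.2} directly; these are the same argument.
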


\begin{proof}
$\ipi_b(Q)$ is a pseudogroup closed under nonempty meets in $Q$, and for all $J,K\in\lcc(\ipi_b(Q))$ we have
\begin{eqnarray*}
\V(J\cap K)&=&\varphi_b(J\cap K) \le \varphi_b(J)\wedge \varphi_b(K)\\
& =& \bigl(\V J\bigr)\wedge\bigl(\V K\bigr)
=\V_{s\in J,t\in K} s\wedge t = \V(J\cap K)\;,
\end{eqnarray*}
where the last equality follows from $J$ and $K$ being downwards closed. Hence, we see that $\varphi_b$ preserves binary meets, and thus its image $\opens_b(Q)$ is a locale, which by Theorem~\ref{littlelemma1} also implies that $\varphi_b$ is injective. If $\V\ipi_b(Q)=1$ it follows that $\varphi_b(1)=1$, and thus $\varphi_b$ is a homomorphism of locales as stated.
\end{proof}

\begin{corollary}\label{cor:iqfs}
Let $Q$ be a stably Gelfand unital quantal frame such that $\V\ipi(Q)=1$. Then $Q$ is an inverse quantal frame.
\end{corollary}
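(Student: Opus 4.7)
The plan is to apply Theorem~\ref{thm:invqfr} to the projection $b=e$. In any unital involutive quantale $e$ is a projection: $e^2=e$ is automatic, and $e^*=e$ because $e^*$ is readily seen to be a two-sided identity and identities are unique. Moreover the last two conditions defining $\ipi_b(Q)$, namely $se\le s$ and $es\le s$, are automatic when $b=e$, so $\ipi_e(Q)=\ipi(Q)$. The covering hypothesis $\V\ipi(Q)=1$ therefore matches the additional assumption of Theorem~\ref{thm:invqfr}, which yields that
\[
\varphi_e:\lcc(\ipi(Q))\to Q
\]
is an injective homomorphism of involutive quantales preserving binary meets; since it also preserves arbitrary joins and sends the top compatible ideal $\ipi(Q)$ to $\V\ipi(Q)=1$, it is in fact a homomorphism of locales.

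What remains is to show that $\varphi_e$ is surjective. Once this is established, $\varphi_e$ is a bijective homomorphism of involutive quantal frames, hence an isomorphism, and thus $Q\cong\lcc(\ipi(Q))$ is an inverse quantal frame by the general facts about $\lcc(S)$ recalled in Section~\ref{sec:locproj}. For surjectivity, given $a\in Q$ I would consider
\[
J_a=\{s\in\ipi(Q)\st s\le a\}\;.
\]
This set is clearly downwards closed in $\ipi(Q)$ and closed under compatible joins (a compatible join of elements below $a$ remains below $a$), so $J_a\in\lcc(\ipi(Q))$. Using locale distributivity and the covering hypothesis,
\[
a = a\wedge 1 = a\wedge\V\ipi(Q) = \V_{s\in\ipi(Q)}\, a\wedge s\;,
\]
and each $a\wedge s$ is again a partial unit, since $(a\wedge s)(a\wedge s)^*\le ss^*\le e$ and similarly $(a\wedge s)^*(a\wedge s)\le e$. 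Hence every summand $a\wedge s$ belongs to $J_a$, so combining with the trivial inequality $\V J_a\le a$ one gets $\varphi_e(J_a)=\V J_a=a$.

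The only mildly delicate point is the closure of $\ipi(Q)$ under meets with arbitrary elements of $Q$, but this is immediate from the monotonicity of multiplication and involution. Everything else is a direct consequence of Theorem~\ref{thm:invqfr} together with the universal property of $\lcc(S)$, so I do not expect any real obstacle.
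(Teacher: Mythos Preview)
Your proof is correct and follows essentially the same route as the paper: apply Theorem~\ref{thm:invqfr} with $b=e$ to obtain injectivity of $\varphi_e$, and then use frame distributivity together with $\V\ipi(Q)=1$ and closure of $\ipi(Q)$ under meets with arbitrary elements to show that $\ipi(Q)$ is join-dense in $Q$, hence $\varphi_e$ is surjective. The paper's version is slightly terser (it does not explicitly name $J_a$ or verify that $e$ is a projection), but the argument is the same.
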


\begin{proof}
Applying Theorem~\ref{thm:invqfr} with $b=e$, we have that $\varphi_e:\lcc(\ipi(Q))\to Q$ is an injective homomorphism of locales. Moreover, for all $a\in Q$ we have
\[
a=a\wedge 1=a\wedge\V\ipi(Q) = \V_{s\in\ipi(Q)} a\wedge s\;.
\]
Since $a\wedge s$ is in $\ipi(Q)$ for all $s\in\ipi(Q)$,
we conclude that $\ipi(Q)$ is join-dense in $Q$, and thus $\varphi_e$ is surjective, showing that $\varphi_e$ is an isomorphism and, therefore, that $Q$ is an inverse quantal frame isomorphic to $\lcc(\ipi(Q))$.
\end{proof}

\section{Projections versus maps}\label{sec:projmaps}

Let $Q$ be a stably Gelfand quantale. As we have seen, a projection $b\in Q$ determines a map $p_b:Q\to\opens(\groupoid_b(Q))$.
We regard $p_b$ as a ``quantal bundle'' over the groupoid $\groupoid_b(Q)$.
Conversely, given an \'etale groupoid $G$ and a map $p:Q\to\opens(G)$ we obtain a projection $p^*(e)$ in $Q$, so there is a back and forth correspondence between projections and ``bundles''. Let us give a brief account of this. First we note that the construction of $p_b$ relates canonically to other maps $q:Q\to\opens(G)$ via a \emph{comparison map} $k$, as follows:

\begin{theorem}\label{lem:relatingquantalbundles}
Let $G$ be a localic \'etale groupoid, and $q:Q\to\opens(G)$ a map. Then $b:=q^*(e)$ is a projection of $Q$, and there is a unique map of unital involutive quantales $k:\opens(\groupoid_b(Q))\to\opens(G)$ that makes the following diagram commute, which moreover is a surjection if $q$ is:
\begin{equation}\label{diagramcompmap}
\xymatrix{
&Q\ar[dl]_{p_b}\ar[dr]^{q}\\
\opens(\groupoid_b(Q))\ar[rr]_-{k}&&\opens(G)
}
\end{equation}
\end{theorem}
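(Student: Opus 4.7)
My plan is to construct $k$ by first restricting $q^*$ to a pseudogroup homomorphism on partial units, and then extending through the universal property of the quantale completion $\opens(G) = \lcc(\ipi(G))$. Since $q^*$ is a homomorphism of involutive quantales and $e \in \opens(G)$ satisfies $e^2 = e = e^*$, the element $b := q^*(e)$ is immediately a projection. For each $s \in \ipi(\opens(G))$ the four partial-unit conditions $s^*s \le e$, $ss^* \le e$, $se = s$, $es = s$ transport under $q^*$ to the conditions defining membership of $q^*(s)$ in $\ipi_b(Q)$; together with the observation that compatible joins in both pseudogroups are computed in their ambient involutive quantales (by the argument used in Theorem~\ref{thm:IbQ}), this shows that the restriction defines a pseudogroup homomorphism $\psi : \ipi(\opens(G)) \to \ipi_b(Q)$.

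Next I will compose $\psi$ with the canonical inclusion $\ipi_b(Q) \hookrightarrow \lcc(\ipi_b(Q)) = \opens(\groupoid_b(Q))$ and invoke the universal property recalled in Section~\ref{sec:locproj} to obtain a unique homomorphism of involutive quantales $k^* : \opens(G) \to \opens(\groupoid_b(Q))$ satisfying $k^*(\downsegment s) = \downsegment q^*(s)$ on principal ideals; the corresponding map will be our $k$. Commutativity of \eqref{diagramcompmap} then reduces by the universal property to equality on principal ideals, where $\varphi_b(\downsegment q^*(s)) = \V\downsegment q^*(s) = q^*(s)$. For unitality I need to show $k^*(e_{\opens(G)}) = E(\ipi_b(Q))$ in $\lcc(\ipi_b(Q))$: since $e_{\opens(G)}$ is the top idempotent of $\ipi(G)$ with $\psi(e_{\opens(G)}) = b$, and since every element of $\ipi_b(Q)$ bounded by $b$ already lies in $E(\ipi_b(Q)) = \ts(\downsegment b)$ (by the absorption identities $tb = bt = t$ and Theorem~\ref{thm:IbQ}(2)), the union $\bigcup_{f \in E(\ipi(G))} \downsegment q^*(f)$ equals $E(\ipi_b(Q))$; this union is already a compatible ideal, hence equals the join $k^*(e_{\opens(G)})$.

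For uniqueness, any unital involutive-quantale homomorphism $k'^*$ with $\varphi_b \circ k'^* = q^*$ must send partial units of $\opens(G)$ to partial units of $\opens(\groupoid_b(Q))$; using the identification $\ipi(\opens(\groupoid_b(Q))) \cong \ipi_b(Q)$, each $k'^*(\downsegment s)$ takes the form $\downsegment t$ for some $t \in \ipi_b(Q)$, and commutativity forces $t = \varphi_b(\downsegment t) = q^*(s)$, so $k'^* = k^*$ on principal ideals and hence everywhere. Finally, if $q$ is a surjection then $q^*$ is injective by definition, and from $q^* = \varphi_b \circ k^*$ we infer $k^*$ injective, i.e., $k$ is a surjection. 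The main obstacle I expect is the verification of unitality: the universal property recalled in Section~\ref{sec:locproj} only produces an involutive-quantale homomorphism, so unitality has to be extracted from the careful matching of $b$ with the top idempotent of $\ipi(G)$ together with the identification $\downsegment b = E(\ipi_b(Q))$ inside $\ipi_b(Q)$.
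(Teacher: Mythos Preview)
Your argument is correct and follows essentially the same route as the paper: restrict $q^*$ to a pseudogroup homomorphism $\ipi(\opens(G))\to\ipi_b(Q)$, extend via the universal property of $\lcc$ to obtain $k^*$, and deduce commutativity, uniqueness, and the surjection claim from the factorization $q^*=\varphi_b\circ k^*$. You supply more detail than the paper on unitality and uniqueness (the paper simply applies the functor $\lcc$ and asserts these are clear), but the underlying strategy is identical.
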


\begin{proof}
The multiplication unit $e\in\opens(G)$ is a projection of $\opens(G)$, and thus $p_b^*(e)$ is a projection of $Q$ because $p_b^*$ is an involutive homomorphism. Let $s\in\ipi(\opens(G))$. Then we have $p_b^*(s)b=p_b^*(s)p_b^*(e)=p_b^*(se)=p_b^*(s)$ and $p_b^*(s)^*p_b^*(s)=p_b^*(s^*)p_b^*(s)=p_b^*(s^*s)\le p_b^*(e)=b$, and, similarly, we obtain $bp_b^*(s)=p_b^*(s)$ and $p_b^*(s)p_b^*(s)^*\le b$, showing that $p_b^*(s)\in\ipi_b(Q)$. Hence, $p_b^*(\ipi(\opens(G)))\subset\ipi_b(Q)$ and thus $p_b^*$ restricts to a homomorphism of pseudogroups $\ipi(\opens(G))\to\ipi_b(Q)$. Applying the functor $\lcc$ we extend this to a homomorphism of unital involutive quantales $h:\lcc(\ipi(\opens(G)))\to\lcc(\ipi_b(Q))$, and the inverse image homomorphism of $k$ is defined as the following composition:
\[
\xymatrix{
\opens(G)\ar@/_1cm/[rrr]_{k^*}\ar[r]^-{\cong}&\lcc(\ipi(\opens(G)))\ar[r]^-{h}&\lcc(\ipi_b(Q))\ar@{=}[r]&\opens(\groupoid_b(Q))
}
\]
Of course, $k$ makes the diagram \eqref{diagramcompmap} commute because the restrictions of the three maps to the pseudogroups $\ipi(\opens(G))$ and $\ipi_b(Q)$ commute, and it is clear that $k$ is the only map with this property. Moreover, if $q$ is a surjection then $h$ is injective, and thus $k$ is a surjection.
\end{proof}

\begin{corollary}
Let $\boldsymbol Q$ be the category whose objects are the pairs $(Q,b)$ where $Q$ is a stably Gelfand quantale and $b\in Q$ is a projection, and whose morphisms $p:(Q,b)\to(R,c)$ are the maps $p:Q\to R$ such that $p^*(c)=b$. And let $\boldsymbol I$ be the category whose objects are the inverse quantal frames and whose morphisms $p:Q\to R$ are the maps of unital involutive quantales. The functor $U:\boldsymbol I\to\boldsymbol Q$ defined on objects by $U(Q)=(Q,e)$ has a left adjoint that to each pair $(Q,b)$ assigns $\opens(\groupoid_b(Q))$, and the adjunction is a reflection.
\end{corollary}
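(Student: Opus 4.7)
The plan is to build the adjunction directly on top of Theorem~\ref{lem:relatingquantalbundles}, which already packages essentially all of the required universal property. First I would define the functor $F:\boldsymbol Q\to\boldsymbol I$ by $F(Q,b)=\opens(\groupoid_b(Q))$ on objects; on a morphism $f:(Q,b)\to(R,c)$ in $\boldsymbol Q$, the composite $q:=p_c\circ f:Q\to\opens(\groupoid_c(R))$ is a map whose associated projection is $q^*(e)=f^*(p_c^*(e))=f^*(c)=b$, so Theorem~\ref{lem:relatingquantalbundles} produces a unique comparison map that we take to be $F(f):\opens(\groupoid_b(Q))\to\opens(\groupoid_c(R))$. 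Functoriality of $F$ then follows from the uniqueness clause of that theorem.

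I would declare $\eta_{(Q,b)}:=p_b$ to be the unit of the adjunction at $(Q,b)$. To see that this is a morphism $(Q,b)\to U(F(Q,b))=(\opens(\groupoid_b(Q)),e)$ in $\boldsymbol Q$, we must verify $p_b^*(e)=b$. The multiplicative unit of $\lcc(\ipi_b(Q))$ is the set $E(\ipi_b(Q))$ of idempotents, which by Theorem~\ref{thm:IbQ} equals $\ts(\downsegment b)$; since $b$ itself belongs to $\ts(\downsegment b)$ and is the top of $\downsegment b$, one has $p_b^*(e)=\varphi_b(\ts(\downsegment b))=\V\ts(\downsegment b)=b$, as required.

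For the universal property, given any morphism $q:(Q,b)\to U(R)=(R,e)$ in $\boldsymbol Q$, the hypothesis $q^*(e)=b$ is precisely the projection condition of Theorem~\ref{lem:relatingquantalbundles} applied with $G=\groupoid(R)$ and $\opens(G)=R$; the theorem then produces a unique map of unital involutive quantales $k:F(Q,b)\to R$ with $k\circ p_b=q$, which is the required factorization $q=U(k)\circ\eta_{(Q,b)}$. To conclude that the adjunction is a reflection it remains to check that the counit $\epsilon_R:F(U(R))\to R$ is an isomorphism for every inverse quantal frame $R$; but by Example~\ref{em:invquantales} we have $\ipi_e(R)=\ipi(R)$ and $\varphi_e:\lcc(\ipi(R))\to R$ is already an isomorphism in this case, so the unit $\eta_{U(R)}=p_e$ is itself an isomorphism, which forces $\epsilon_R$ to be its inverse.

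The main obstacle is not a substantial calculation but rather bookkeeping: keeping track of which ``$e$'' lives in which quantale, and correctly identifying the unit of $\lcc(\ipi_b(Q))$ with $\ts(\downsegment b)$ through Theorem~\ref{thm:IbQ}. The verification $p_b^*(e)=b$ is the one step where confusing the ambient quantale would derail the argument; beyond that, Theorem~\ref{lem:relatingquantalbundles} does essentially all the work, and naturality of $\eta$ and $\epsilon$ falls out of the uniqueness clause in that theorem.
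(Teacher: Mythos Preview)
Your proof is correct and follows essentially the same approach as the paper: the unit is $p_b$, the universal property comes from Theorem~\ref{lem:relatingquantalbundles}, and the reflection follows from $\lcc(\ipi(R))\cong R$ for inverse quantal frames. You are in fact more thorough than the paper, which omits the verification that $p_b^*(e)=b$ and the explicit description of $F$ on morphisms; your computation $\varphi_b(E(\ipi_b(Q)))=\V\ts(\downsegment b)=b$ is exactly the missing detail.
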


\begin{proof}
The unit of the adjunction is the family of maps
\[
p_b:(Q,b)\to(\opens(\groupoid_b(Q)),e)
\]
and the universal property follows from the uniqueness of the comparison map $k$ in Theorem~\ref{lem:relatingquantalbundles}. 
Let $F$ be the left adjoint. For each inverse quantal frame $Q$ the $Q$-component of the counit of the adjunction is an isomorphism
\[
\xymatrix{
F(U(Q))=\opens(\groupoid_e(Q))=\lcc(\ipi_e(Q))=\lcc(\ipi(Q))\ar[rr]^-{\cong}&& Q\;,
}
\]
so the adjunction is a reflection.
\end{proof}

One advantage of focusing on maps of involutive quantales instead of homomorphisms is the possibility of applying topological language to maps, such as when studying notions akin to that of open map of locales. For instance, a map of involutive quantales $p:Q\to X$ is said to be semiopen if $p^*$ has a left adjoint $p_!$, which we refer to as the direct image homomorphism of $p$. A stronger notion is that of open map of~\cite{OMIQ}, of which a class of examples is provided by the surjections of involutive quantales $p:Q\to X$, with $X$ unital, that satisfy the two-sided Frobenius reciprocity condition for all $a,a'\in Q$ and $x\in X$:
\[
p_!(ap^*(x)a') = p_!(a)xp_!(a')\;.
\]
If $X=\opens(G)$ for some \'etale groupoid $G$ there is more that we can say about the comparison map of Theorem~\ref{lem:relatingquantalbundles}.

\begin{theorem}\label{thm:split}
Let $Q$ be a stably Gelfand quantale, and $G$ an \'etale groupoid. Let also $q:Q\to\opens(G)$ be a semiopen surjection satisfying the two-sided Frobenius reciprocity condition, and let $b$ be the projection $q^*(e)\in Q$. The comparison map $k:\opens(\groupoid_b(Q))\to\opens(G)$ is a split surjection.
\end{theorem}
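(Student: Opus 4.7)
The plan is to construct the splitting directly using the direct image homomorphism $q_!$ supplied by semiopenness. Recall that $q_!\dashv q^*$ with $q_!q^*=\mathrm{id}$ (by surjectivity), that $q_!$ preserves involution (a standard adjunction consequence of $q^*$ doing so), and that $q_!(b)=q_!(q^*(e))=e$.

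The key step is to show that $q_!$ restricts to a pseudogroup homomorphism $\sigma:\ipi_b(Q)\to\ipi(\opens(G))$. The identities $ab=ba=a$ from Theorem~\ref{thm:IbQ} combined with $b=q^*(e)$ let us rewrite, for any $a,a'\in\ipi_b(Q)$,
\[
a^*a=a^*q^*(e)a,\qquad aa'=aq^*(e)a',
\]
so Frobenius reciprocity yields $q_!(a^*a)=q_!(a)^*q_!(a)$ and $q_!(aa')=q_!(a)q_!(a')$. Since $a^*a\le b$ (and similarly $aa^*\le b$), we deduce $q_!(a)^*q_!(a)\le q_!(b)=e$ and $q_!(a)q_!(a)^*\le e$, putting $q_!(a)$ in $\ipi(\opens(G))$. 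Together with preservation of involution and of all joins (hence of joins of compatible families), this makes $\sigma$ a pseudogroup homomorphism.

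By the universal property of $\lcc(\ipi_b(Q))$, $\sigma$ extends uniquely to an involutive-quantale homomorphism $s^*:\opens(\groupoid_b(Q))=\lcc(\ipi_b(Q))\to\opens(G)$ with $s^*(J)=\V_{a\in J}q_!(a)$. It is unital because the unit $E(\ipi_b(Q))=\ts(\downsegment b)$ of $\lcc(\ipi_b(Q))$ has join $b$ in $Q$, which $s^*$ sends to $q_!(b)=e$. Hence $s^*$ is the inverse-image homomorphism of a map $s:\opens(G)\to\opens(\groupoid_b(Q))$ of unital involutive quantales.

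Finally I would verify $s^*\circ k^*=\mathrm{id}_{\opens(G)}$ by evaluating on the join-dense set $\ipi(\opens(G))$. By the construction of $k^*$ in the proof of Theorem~\ref{lem:relatingquantalbundles}, for $t\in\ipi(\opens(G))$ one has $k^*(t)=\downsegment q^*(t)$ in $\lcc(\ipi_b(Q))$, so $s^*(k^*(t))=q_!(q^*(t))=t$ by surjectivity. Join-preservation on both sides, together with join-density of $\ipi(\opens(G))$ in the inverse quantal frame $\opens(G)$, then gives $s^*\circ k^*=\mathrm{id}$, so $k\circ s=\mathrm{id}$ and $k$ is a split surjection. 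The only real obstacle is the restriction step: once Frobenius reciprocity has been activated by the identities $ab=ba=a$ and $b=q^*(e)$, everything else follows formally from the universal property of $\lcc$.
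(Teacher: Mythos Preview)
Your proof is correct and follows essentially the same route as the paper: restrict $q_!$ to a pseudogroup homomorphism $\ipi_b(Q)\to\ipi(\opens(G))$ using the identity $a=aq^*(e)a'$ together with two-sided Frobenius, then extend via $\lcc$ to obtain the inverse image of the section. Your write-up is in fact slightly more complete than the paper's, since you explicitly verify $s^*\circ k^*=\mathrm{id}$ on the join-dense set $\ipi(\opens(G))$ and check unitality of $s^*$, whereas the paper simply declares the resulting map to be the section that splits $k$.
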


\begin{proof}
The fact that $q$ is a surjection implies that the comparison map $k$ is a surjection, by Theorem~\ref{lem:relatingquantalbundles}. In order to see that $k$ splits let $v\in\ipi_b(Q)$ and $s=q_!(v)$. Then
\begin{equation}\label{eq:prepshrhom}
s^*s=q_!(v^*)q_!(v)\ge q_!(v^*v)\;,
\end{equation}
due to adjointness, because $q^*$ is a homomorphism. But we also have, using the two-sided Frobenius reciprocity condition,
\begin{equation}\label{eq:pshrhom}
s^*s=s^*es=q_!(v^*)eq_!(v)=q_!(v^*q^*(e)v)=q_!(v^*bv)\le q_!(v^*v)\le q_!(b)\;,
\end{equation}
and, again using the fact that $q$ is a surjection, we have $q_!(b)=q_!(q^*(e))=e$, so $s^*s\le e$. Analogously we prove that $ss^*\le e$, and thus $s\in\ipi(\opens(G))$. So $q_!$ restricts to a mapping $f:\ipi_b(Q)\to\ipi(\opens(G))$, which by \eqref{eq:prepshrhom}--\eqref{eq:pshrhom} is a homomorphism of pseudogroups, and the section that splits $k$ is the map $\sigma$ whose inverse image homomorphism is given by the following composition:
\[
\xymatrix{
\opens(G)\ar@{<-}@/_1cm/[rrrr]_{\sigma^*}\ar@{<-}[r]^-{\cong}&\lcc(\ipi(\opens(G)))\ar@{<-}[rr]^-{\lcc(f)}&&\lcc(\ipi_b(Q))\ar@{<-}@{=}[r]&\opens(\groupoid_b(Q))
}
\]
\end{proof}

\begin{bibdiv}

\begin{biblist}

\bib{Exel}{article}{
  author={Exel, Ruy},
  title={Noncommutative Cartan subalgebras of $C^*$-algebras},
  journal={New York J. Math.},
  volume={17},
  date={2011},
  pages={331--382},
  issn={1076-9803},
  review={\MR {2811068}},
}

\bib{Garraway}{article}{
  author={Garraway, W. Dale},
  title={Sheaves for an involutive quantaloid},
  language={English, with French summary},
  journal={Cah. Topol. G\'eom. Diff\'er. Cat\'eg.},
  volume={46},
  date={2005},
  number={4},
  pages={243--274},
  issn={1245-530X},
  review={\MR {2248095 (2007d:18020)}},
}

\bib{elephant}{book}{
  author={Johnstone, Peter T.},
  title={Sketches of an Elephant: A Topos Theory Compendium. Vol. 2},
  series={Oxford Logic Guides},
  volume={44},
  publisher={The Clarendon Press Oxford University Press},
  place={Oxford},
  date={2002},
  pages={i--xxii, 469--1089 and I1--I71},
  isbn={0-19-851598-7},
  review={\MR {2063092 (2005g:18007)}},
}

\bib{JT}{article}{
  author={Joyal, Andr{\'e}},
  author={Tierney, Myles},
  title={An extension of the Galois theory of Grothendieck},
  journal={Mem. Amer. Math. Soc.},
  volume={51},
  date={1984},
  number={309},
  pages={vii+71},
  issn={0065-9266},
  review={\MR {756176 (86d:18002)}},
}

\bib{KPRR}{article}{
  author={Kruml, David},
  author={Pelletier, Joan Wick},
  author={Resende, Pedro},
  author={Rosick{\'y}, Ji{\v {r}}{\'{\i }}},
  title={On quantales and spectra of $C\sp *$-algebras},
  journal={Appl. Categ. Structures},
  volume={11},
  date={2003},
  number={6},
  pages={543--560},
  issn={0927-2852},
  review={\MR {2017650 (2004i:46107)}},
}

\bib{KR}{article}{
  author={Kruml, David},
  author={Resende, Pedro},
  title={On quantales that classify $C\sp \ast $-algebras},
  language={English, with French summary},
  journal={Cah. Topol. G\'eom. Diff\'er. Cat\'eg.},
  volume={45},
  date={2004},
  number={4},
  pages={287--296},
  issn={1245-530X},
  review={\MR {2108195 (2006b:46096)}},
}

\bib{Kumjian}{article}{
  author={Kumjian, Alexander},
  title={On $C^\ast $-diagonals},
  journal={Canad. J. Math.},
  volume={38},
  date={1986},
  number={4},
  pages={969--1008},
  issn={0008-414X},
  review={\MR {854149 (88a:46060)}},
}

\bib{Kumjian98}{article}{
  author={Kumjian, Alex},
  title={Fell bundles over groupoids},
  journal={Proc. Amer. Math. Soc.},
  volume={126},
  date={1998},
  number={4},
  pages={1115--1125},
  issn={0002-9939},
  review={\MR {1443836 (98i:46055)}},
  doi={10.1090/S0002-9939-98-04240-3},
}

\bib{Lawson}{book}{
  author={Lawson, Mark V.},
  title={Inverse semigroups --- the theory of partial symmetries},
  publisher={World Scientific Publishing Co., Inc., River Edge, NJ},
  date={1998},
  pages={xiv+411},
  isbn={981-02-3316-7},
  review={\MR {1694900 (2000g:20123)}},
  doi={10.1142/9789812816689},
}

\bib{LL}{article}{
  author={Lawson, Mark V.},
  author={Lenz, Daniel H.},
  title={Pseudogroups and their \'etale groupoids},
  journal={Adv. Math.},
  volume={244},
  date={2013},
  pages={117--170},
  issn={0001-8708},
  review={\MR {3077869}},
  doi={10.1016/j.aim.2013.04.022},
}

\bib{Curacao}{misc}{
  author={Mulvey, Christopher J.},
  title={Quantales},
  note={Invited talk at the Summer Conference on Locales and Topological Groups (Cura\c {c}ao, 1989)},
}

\bib{MP1}{article}{
  author={Mulvey, Christopher J.},
  author={Pelletier, Joan Wick},
  title={On the quantisation of points},
  journal={J. Pure Appl. Algebra},
  volume={159},
  date={2001},
  number={2-3},
  pages={231--295},
  issn={0022-4049},
  review={\MR {1828940 (2002g:46126)}},
}

\bib{MP2}{article}{
  author={Mulvey, Christopher J.},
  author={Pelletier, Joan Wick},
  title={On the quantisation of spaces},
  note={Special volume celebrating the 70th birthday of Professor Max Kelly},
  journal={J. Pure Appl. Algebra},
  volume={175},
  date={2002},
  number={1-3},
  pages={289--325},
  issn={0022-4049},
  review={\MR {1935983 (2003m:06014)}},
}

\bib{PR12}{article}{
  author={Protin, M. Clarence},
  author={Resende, Pedro},
  title={Quantales of open groupoids},
  journal={J. Noncommut. Geom.},
  volume={6},
  date={2012},
  number={2},
  pages={199--247},
  issn={1661-6952},
  review={\MR {2914865}},
  doi={10.4171/JNCG/90},
}

\bib{Renault}{article}{
  author={Renault, Jean},
  title={Cartan subalgebras in $C^*$-algebras},
  journal={Irish Math. Soc. Bull.},
  number={61},
  date={2008},
  pages={29--63},
  issn={0791-5578},
  review={\MR {2460017 (2009k:46135)}},
}

\bib{gamap2006}{article}{
  author={Resende, Pedro},
  title={Lectures on \'{e}tale groupoids, inverse semigroups and quantales},
  conference={ title={SOCRATES Intensive Program 103466-IC-1-2003-1-BE-ERASMUS-IPUC-3 --- GAMAP: Geometric and Algebraic Methods of Physics and Applications}, date={2006}, place={Univ. Antwerp}, },
  eprint={https://www.researchgate.net/publication/265630468},
}

\bib{Re07}{article}{
  author={Resende, Pedro},
  title={\'Etale groupoids and their quantales},
  journal={Adv. Math.},
  volume={208},
  date={2007},
  number={1},
  pages={147--209},
  issn={0001-8708},
  review={\MR {2304314 (2008c:22002)}},
}

\bib{ICOx}{article}{
  author={Resende, Pedro},
  title={Stably Gelfand quantales, groupoids and Cartan sub-C*-algebras},
  conference={ title={5th Workshop on Categories, Logic, and the Foundations of Physics}, date={August 6, 2009}, place={Imperial College, London, UK}, },
}

\bib{GSQS}{article}{
  author={Resende, Pedro},
  title={Groupoid sheaves as quantale sheaves},
  journal={J. Pure Appl. Algebra},
  volume={216},
  date={2012},
  number={1},
  pages={41--70},
  issn={0022-4049},
  review={\MR {2826418}},
  doi={10.1016/j.jpaa.2011.05.002},
}

\bib{QFB}{report}{
  author={Resende, Pedro},
  title={Quantales and Fell bundles},
  eprint={arXiv:1701.08653v2},
  year={2017},
}

\bib{OMIQ}{report}{
  author={Resende, Pedro},
  title={Open maps of involutive quantales},
  eprint={arXiv:1706.04909},
  year={2017},
}

\bib{Rosenthal1}{book}{
  author={Rosenthal, Kimmo I.},
  title={Quantales and Their Applications},
  series={Pitman Research Notes in Mathematics Series},
  volume={234},
  publisher={Longman Scientific \& Technical},
  place={Harlow},
  date={1990},
  pages={x+165},
  isbn={0-582-06423-6},
  review={\MR {1088258 (92e:06028)}},
}

\end{biblist}

\end{bibdiv}
\vspace*{5mm}
\noindent {\sc
Centro de An\'alise Matem\'atica, Geometria e Sistemas Din\^amicos
Departamento de Matem\'{a}tica, Instituto Superior T\'{e}cnico\\
Universidade de Lisboa\\
Av.\ Rovisco Pais 1, 1049-001 Lisboa, Portugal}\\
{\it E-mail:} {\sf pmr@math.tecnico.ulisboa.pt}

\end{document}